\documentclass[english,12pt,a4paper]{amsart}
\usepackage[T1]{fontenc}
\usepackage{lmodern}
\usepackage[width=14cm, height=21cm]{geometry}
\usepackage{graphicx}
\usepackage{mathtools}
\usepackage{amssymb}
\usepackage{amsthm}
\usepackage{amsmath}
\usepackage{babel}
\usepackage{hyperref}
\usepackage{enumerate}

\theoremstyle{plain}
\newtheorem{theorem}{Theorem}[section]
\newtheorem{corollary}[theorem]{Corollary}
\newtheorem{lemma}{Lemma}[section]

\begin{document}
	\title[On Ramanujan-G\"ollnitz-Gordon  continued fraction]{On  new identities connecting Ramanujan-G\"ollnitz-Gordon  continued fraction and Ramanujan's continued fraction of order four}
	
\author{Shruthi C. Bhat}
\address{Shruthi C. Bhat, Manipal Institute of Technology, Manipal Academy of Higher Education, Manipal-576104, Karnataka, India.}
\email{shruthi.research1@gmail.com}

\author{B. R. Srivatsa Kumar}
\address{B. R. Srivatsa Kumar, Manipal Institute of Technology, Manipal Academy of Higher Education, Manipal-576104, Karnataka, India.}
\email{srivatsa.kumar@manipal.edu}

\maketitle
		
		\begin{abstract}
			By employing the classical tools from the theory of $q$-series and theta functions, new fascinating identities on different continued fractions can be achieved. In this article, we use the product expansion of Jacobi's theta function to establish identities that connect Ramanujan-G\"ollnitz-Gordon  continued fraction with Ramanujan's continued fraction of order four. Also, we obtain Lambert series identities using Ramanujan's $_1 \psi_1$ summation formula.  
		\end{abstract}
		
	\noindent	\textbf{Keywords:} Ramanujan-G\"ollnitz-Gordon  continued fraction, Lambert series, theta function.\\
		
		\noindent \textbf{Mathematics Subject Classification 2020:} 11A55, 14K25, 20C20, 30B70.
		
		\section{Introduction}
		Classical theta functions provide a strong framework for studying Ramanujan's continued fractions, which play key role in the theory of special functions. Ramanujan introduced several continued fractions which have deep connections to $q$-series,  Eisenstein series, and modular equations. A detailed description of Ramanujan's continued fractions and $q$-series identities can be found in his notebooks \cite{Ramanujan1957, Ramanujan2000} and in extensive commentaries of Berndt \cite{Berndt1991}. In the recent years, many mathematicians found the theory of continued fractions intriguing and worth revisiting. 
		
		The Entry 12 \cite[p. 24]{Berndt1991} in Ramanujan's second notebook gives the following general form of continued fraction. Let $k,l,q \in \mathbb{C}$ with $|kl|<1$ and $|q|<1$ or that $k=l^{2m+1}$ for some $m\in \mathbb{Z}$. Then
		
		\begin{align}
			\dfrac{(k^2q^3;q^4)_{\infty}(l^2q^3;q^4)_{\infty}}{(k^2q;q^4)_{\infty}(l^2q;q^4)_{\infty}} = \dfrac{1}{(1-kl)+\dfrac{(k-lq)(l-kq)}{(1-kl)(q^2+1)+\dfrac{(k-lq^3)(l-kq^3)}{(1-kl)(q^4+1)+\ldots}}}.\label{gcf}
		\end{align}
		The above continued fraction can also be found in \cite{Adiga_1985}. Ramanujan \cite{Ramanujan1957,Ramanujan1988} dealt with various special cases of the above expression and recorded algebraic relations on them.	The following is a special case of general continued fraction, known as Ramanujan-G\"ollnitz-Gordon  continued fraction.   
		\begin{align}\label{H}
			\mathfrak{H}(q):=q^{1/2} \dfrac{f(-q,-q^7)}{f(-q^3,-q^5)}=\dfrac{q^{1/2}}{1+q+\dfrac{q^2}{1+q^3+\dfrac{q^4}{1+q^5+\ldots}}},
		\end{align}
		where	
		\begin{align}\label{id1}
			f(\gamma,\delta)=\sum_{j=-\infty}^{\infty} \gamma^{j(j+1)/2}\delta^{j(j-1)/2},  \quad |\gamma \delta|<1,
		\end{align} 
		is the well-known Ramanujan's general theta function \cite[p. 35, Entry 19]{Berndt1991}. Ramanujan recorded the above product representation of $\mathfrak{H}(q)$ in his notebook \cite[p. 229]{Ramanujan1957}. G\"ollnitz \cite{Gollnitz1967} and Gordon \cite{Gordon1965} rediscovered and established the same product representation, being unaware of Ramanujan's work. Later, Andrews \cite{Andrews1968} proved \eqref{H} as corollary of a more general result. Ramanujan \cite[p. 229]{Ramanujan1957} also documented the following identities associated to $\mathfrak{H}(q)$, proof of which can be found in Berndt's book\cite[p. 221]{Berndt1991}:
		\begin{align}\label{H-H}
			\dfrac{1}{\mathfrak{H}(q)} - \mathfrak{H}(q) = \dfrac{\varphi(q^2)}{q^{1/2} \psi(q^4)}
		\end{align} and 
		\begin{align}\label{H+H}
			\dfrac{1}{\mathfrak{H}(q)} + \mathfrak{H}(q) = \dfrac{\varphi(q)}{q^{1/2} \psi(q^4)},
		\end{align}
		where 
		\begin{align*}
			\varphi(q):=f(q, q)=\frac{(-q;-q)_\infty}{(q;-q)_\infty}= \sum_{j=-\infty}^{\infty} q^{j^2}
		\end{align*} and 
		\begin{align*}
			\psi(q) := f(q,q^3)= \frac{(q^2;q^2)_\infty}{(q;q^2)_\infty}=\sum_{j=0}^{\infty} q^{j(j+1)/2}.
		\end{align*}
		
		A lot of wonderful work on Ramanujan-G\"ollnitz-Gordon continued fraction can be found in the literature. Chan and Huang \cite{Chan_1997} established relations between $\mathfrak{H}(q)$ and  $\mathfrak{H}(q^n)$ using modular equations of degree $n$. Further, they obtained the explicit evaluations as well.
		Vasuki and Srivatsa Kumar \cite{Vasuki_2006} obtained three new identities connecting $\mathfrak{H}(q)$ with $\mathfrak{H}(q^5)$, $\mathfrak{H}(q^7)$ and $\mathfrak{H}(q^{11})$. Moreover, during this process, they provided a whole new simple approach to arrive at the relation connecting $\mathfrak{H}(q)$ and $\mathfrak{H}(q^3)$ which was previously established by Chan and Huang using a different method. Recently, Chaudhary and Vanitha \cite{Chaudhary_2025} established several Eisenstein series identities associated to $\mathfrak{H}(q)$ and explored the relations to combinatorial partition identities.
		
		Naika \textit{et al.} \cite{Naika_2011} dealt with the continued fraction of order four which is defined as follows.
		\begin{align}\label{I}
			\mathfrak{I}(q):= \dfrac{f(-q,-q^3)}{f(-q^2,-q^2)}=\dfrac{1}{1+\dfrac{q}{1+\dfrac{q^2+q}{1+\dfrac{q^3}{1+\dfrac{q^4+q^2}{1+\ldots}}}}}.
		\end{align}
		They established results on $\mathfrak{I}(q)$ those are analogous to Rogers-Ramanujan and cubic continued fractions.
		
		Adiga \textit{et al.} \cite{Adiga_2014} proved identities connecting Ramanujan's cubic continued fraction with a continued fraction of of order six. Further, they established the associated identities as well. Also, for similar work on continued fractions of order twelve and sixteen, \cite{Adiga_2014b} and \cite{Vanitha_2025} can be referred respectively.
		
		On employing the product expansion of Jacobi's theta function,  the identities relating continued fractions of different orders can be derived. The purpose of this article is to obtain a  family of new identities involving Ramanujan-G\"ollnitz-Gordon continued fraction. The evaluation of Jacobi's theta function $\theta_1$ at special arguments $z=\dfrac{\pi}{8}, \dfrac{2\pi}{8}, \dfrac{3\pi}{8}$ and utilization of their product representations yield the explicit expressions for the infinite products 
		\begin{align*}
			\Gamma_k(q) = \prod_{n=1}^{\infty} (1+\beta_kq^n +q^{2n} ), \qquad 1\leq k\leq 3,
		\end{align*} where the coefficients $\beta_k$ are algebraic numbers arising from fourth root of unity. These products play an important role in relating theta functions to Ramanujan's continued fractions. In Section 2, we provide all the definitions and preliminary results required. In Section 3, we give the identities that involve Ramanujan-G\"ollnitz-Gordon continued fraction and continued fraction of order four. In Section 4, we establish new Lambert series identities associated to Ramanujan-G\"ollnitz-Gordon continued fraction. 
		
		\section{Definitions and Preliminary results}
		Throughout the article, let $q \in \mathbb{C}$, such that $ |q| < 1$. Then $q$-Pochhammer symbol or $q$-shifted factorial \cite{Berndt1991} is defined as follows. For any $\delta \in \mathbb{C}$ and $m \in \mathbb{Z^+}$,
		\begin{align*}
			(\delta;q)_0:=1, \quad		(\delta;q)_m:= \prod_{j=1}^{m}(1-\delta q^{j-1}) \quad \text{and} \quad 	(\delta;q)_\infty:= \prod_{j=0}^{\infty}(1-\delta q^{j}).
		\end{align*}
		\noindent For each $\delta \in \mathbb{C}$, the above product is uniformly and absolutely convergent in every compact subset of the unit disc $|q| < 1$. We use the following notation for ease.
		\begin{align*}
			(\delta_1, \delta_2, \ldots, \delta_m;q)_\infty = (\delta_1;q)_\infty (\delta_2;q)_\infty \ldots (\delta_m;q)_\infty.
		\end{align*}
		The Jacobi's triple product identity states that for $z\ne 0$, 
		\begin{align*}
			\sum_{j=-\infty}^{\infty} (-1)^j q^{\frac{j(j-1)}{2}} z^j = (q;q)_\infty (z;q)_\infty (q/z;q)_\infty.
		\end{align*}
		After Ramanujan,
		\begin{align*}
			f(\beta,\gamma) = (-\beta;\beta \gamma)_\infty (-\gamma;\beta \gamma)_\infty (\beta \gamma;\beta \gamma)_\infty, \qquad |\beta \gamma|<1.
		\end{align*}
		Dedekind-eta function is defined as
		\begin{align}\label{de}
			\eta(\tau) := q^{1/24} (q;q)_\infty,
		\end{align} where $q=e^{2\pi i \tau}$, $Im \tau >0$.
		A Lambert series is a series of the form
		\begin{align*}
			L(q) = \sum_{n=1}^{\infty} a_n \dfrac{q^n}{1-q^n}, \qquad |q|<1,
		\end{align*} 
		where $\{a_n\}_{n\geq 1}$ is an arithmetic sequence. Expanding
		\begin{align*}
			\frac{1}{1-q^n}=\sum_{m=0}^{\infty} q^{mn},
		\end{align*}
		one obtains
		\[
		\sum_{n=1}^{\infty} \frac{a_n q^n}{1-q^n}
		=
		\sum_{N=1}^{\infty}
		\left(
		\sum_{d\mid N} a_d
		\right) q^N,
		\]
		showing that Lambert series naturally generate divisor sums.
		The following results are useful in proving the main results. 
		\begin{lemma}[\cite{Adiga_1985}]
			We have 
			\begin{align}
				f(\gamma,\gamma \delta^2) f(\delta,\gamma^2\delta) &= f(\gamma,\delta) \psi(\gamma \delta), \label{f1}
			\end{align}
			\begin{align} 
				f(\gamma,\delta) +f(-\gamma,-\delta) &= 2f(\gamma^3\delta,\gamma \delta^3), \label{f2}
			\end{align}	and 
			\begin{align}
				f(\gamma,\delta) -f(-\gamma,-\delta) &= 2\gamma f\left(\frac{\delta}{\gamma},\gamma^5 \delta^3\right). \label{f3}
			\end{align}
		\end{lemma}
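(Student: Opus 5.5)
The three identities are proved with the same strategy. We expand $f(\gamma,\delta)$ as the defining bilateral series \eqref{id1}, split the summation index by residue class, and recognize each piece as another instance of $f$. Everything is treated as formal Laurent series in absolutely convergent regions ($|\gamma\delta|<1$), so rearrangements are justified.

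For \eqref{f2} and \eqref{f3}, the general term of $f(-\gamma,-\delta)$ is $(-1)^{j(j+1)/2+j(j-1)/2}\gamma^{j(j+1)/2}\delta^{j(j-1)/2}=(-1)^{j^2}(\cdots)=(-1)^j(\cdots)$. Hence $f(\gamma,\delta)+f(-\gamma,-\delta)$ keeps twice the even-indexed terms, and the difference keeps twice the odd-indexed terms.
\begin{itemize}
\item \emph{Even terms.} Put $j=2k$. The exponents are $k(2k+1)$ of $\gamma$ and $k(2k-1)$ of $\delta$. We need to match this with $(\gamma^3\delta)^{k(k+1)/2}(\gamma\delta^3)^{k(k-1)/2}$, whose $\gamma$-exponent is $\tfrac{3k(k+1)+k(k-1)}{2}=2k^2+k$ and whose $\delta$-exponent is $\tfrac{k(k+1)+3k(k-1)}{2}=2k^2-k$. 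These agree, which gives \eqref{f2}.
\item \emph{Odd terms.} Put $j=2k+1$. The exponents are $(2k+1)(k+1)$ of $\gamma$ and $k(2k+1)$ of $\delta$. After factoring out $\gamma$, compare with $(\delta/\gamma)^{k(k+1)/2}(\gamma^5\delta^3)^{k(k-1)/2}$, whose $\gamma$-exponent is $2k^2-3k$ and whose $\delta$-exponent is $2k^2-k$. These do not line up directly. Reindexing $k\to -k$ gives $\gamma$-exponent $2k^2+3k$ and $\delta$-exponent $2k^2+k$. Since $(2k+1)(k+1)-1=2k^2+3k$, this matches, which gives \eqref{f3}.
\end{itemize}

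For \eqref{f1} we cannot simply split one series, because the left side is a product. I would prove it via the Jacobi triple product in Ramanujan's form. Set $\gamma\delta=Q$. Then
\[
f(\gamma,\gamma\delta^2)=(-\gamma;Q^2)_\infty(-\gamma\delta^2;Q^2)_\infty(Q^2;Q^2)_\infty,
\]
\[
f(\delta,\gamma^2\delta)=(-\delta;Q^2)_\infty(-\gamma^2\delta;Q^2)_\infty(Q^2;Q^2)_\infty.
\]
Note that $\gamma\delta^2=\delta Q$ and $\gamma^2\delta=\gamma Q$. Therefore $(-\gamma;Q^2)_\infty(-\gamma Q;Q^2)_\infty=(-\gamma;Q)_\infty$, and likewise $(-\delta;Q^2)_\infty(-\delta Q;Q^2)_\infty=(-\delta;Q)_\infty$. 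The left side of \eqref{f1} thus equals $(-\gamma;Q)_\infty(-\delta;Q)_\infty(Q^2;Q^2)_\infty^2$.

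The right side is $f(\gamma,\delta)\psi(Q)=(-\gamma;Q)_\infty(-\delta;Q)_\infty(Q;Q)_\infty\cdot\frac{(Q^2;Q^2)_\infty}{(Q;Q^2)_\infty}$. Since $(Q;Q)_\infty/(Q;Q^2)_\infty=(Q^2;Q^2)_\infty$, this is the same expression, which proves \eqref{f1}.

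The only genuinely delicate point is the sign and index bookkeeping in \eqref{f3}: the $k\to-k$ reindexing, and checking that the prefactor is $\gamma$ rather than $\delta$. I would verify this by checking low-order terms, for example $j=1$ and $j=-1$, directly.
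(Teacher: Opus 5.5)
The paper gives no proof of this lemma. It imports it from Adiga et al.\ (these are parts (i)--(iii) of Entry 30 in Chapter 16 of Ramanujan's second notebook), so there is no in-paper argument to compare against. Your proof is correct and complete:
\begin{itemize}
\item The sign $(-1)^{j^2}=(-1)^j$ correctly isolates the even- and odd-indexed terms.
\item For $j=2k$, your exponents match $f(\gamma^3\delta,\gamma\delta^3)$.
\item For $j=2k+1$, the reindexing $k\to -k$ does produce $\gamma f(\delta/\gamma,\gamma^5\delta^3)$, with prefactor $\gamma$. The convergence condition $|\gamma\delta|^4<1$ for the new theta functions is automatic.
\item Your triple-product computation for \eqref{f1} is also right. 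It uses $\gamma\delta^2=\delta Q$, $\gamma^2\delta=\gamma Q$, the merging $(-\gamma;Q^2)_\infty(-\gamma Q;Q^2)_\infty=(-\gamma;Q)_\infty$, and $(Q;Q)_\infty=(Q;Q^2)_\infty(Q^2;Q^2)_\infty$. It needs only the product form of $f$ already recorded in Section 2.
\end{itemize}

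For comparison, one classical route to \eqref{f1} specializes the two-product identity
\[
f(a,b)f(c,d)+f(-a,-b)f(-c,-d)=2f(ac,bd)f(ad,bc),\qquad ab=cd,
\]
at $c=1$, $d=ab$. It then uses $f(-1,-ab)=0$ and $f(1,ab)=2\psi(ab)$. That route places \eqref{f1} inside a larger family of Schr\"oter-type identities. Your direct manipulation of the products is shorter and fully self-contained. The parity split you use for \eqref{f2} and \eqref{f3} is the standard elementary argument.
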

		Subtracting \eqref{f3} from \eqref{f2}, one can deduce
		\begin{align}
			f(-\gamma,-\delta) =f(\gamma^3\delta,\gamma \delta^3) -\gamma f\left(\frac{\delta}{\gamma}, \gamma^5 \delta^3\right). \label{f4}
		\end{align}

		\section{Identities connecting Ramanujan-G\"ollnitz-Gordon continued fraction and Ramanujan's continued fraction of order four}
		Jacobi's theta function, $\theta_1$, is defined as 
		\begin{align}\label{t1}
			\nonumber	\theta_1(z|\tau) &= 2 \sum_{j=0}^{\infty} (-1)^j q^{\frac{(2j+1)^2}{8}} \sin (2j+1) z\\
			&= 2 q^{1/8} \sum_{j=0}^{\infty} (-1)^j q^{\frac{j(j+1)}{2}} \sin (2j+1) z.
		\end{align}
		\noindent 	In \cite{Chan_2009}, Chan \textit{et al.} showed that 
		\begin{align}\label{t2}
			\nonumber	2  \sum_{j=0}^{\infty} (-1)^j q^{\frac{j(j+1)}{2}} \sin (2j+1) z &= \sum_{j=- \infty}^{\infty} (-1)^j q^{\frac{j(j+1)}{2}} \sin (2j+1) z\\
			&= 2 \sin z (q;q)_\infty (qe^{2i z};q)_\infty (qe^{-2i z};q)_\infty
		\end{align}
		Combining the above two equations, one can see that 
		\begin{align}\label{t3}
			\nonumber	\theta_1(z|\tau) &= 2 q^{1/8} \sin z (q;q)_\infty (qe^{2i z};q)_\infty (qe^{-2i z};q)_\infty \\
			&= i q^{1/8} e^{-iz} (q;q)_\infty (e^{2iz};q)_\infty (qe^{-2iz};q)_\infty.
		\end{align}
		Substituting $z=\frac{\pi}{8}$, $\frac{2\pi}{8}$, and $\frac{3\pi}{8}$ respectively in \eqref{t3}, one can deduce the following:
		\begin{align}\label{t81}
			\theta_1\left(\frac{ \pi}{8} | \tau\right) = 2q^{1/8} \left(\sin\frac{\pi}{8}\right) (q;q)_\infty (qe^{\frac{2\pi i}{8}};q)_\infty (qe^{\frac{-2\pi i}{8}};q)_\infty ,
		\end{align} 
		\begin{align}\label{t82}
			\theta_1\left(\frac{ 2\pi}{8} | \tau\right) = 2q^{1/8} \left(\sin\frac{2\pi}{8}\right) (q;q)_\infty (qe^{\frac{4\pi i}{8}};q)_\infty (qe^{\frac{-4\pi i}{8}};q)_\infty ,
		\end{align} and 
		\begin{align}\label{t83}
			\theta_1\left(\frac{ 3\pi}{8} | \tau\right) = 2q^{1/8} \left(\sin\frac{3\pi}{8}\right) (q;q)_\infty (qe^{\frac{6\pi i}{8}};q)_\infty (qe^{\frac{-6\pi i}{8}};q)_\infty.
		\end{align} 
		Multiplying all the three equations, upon using the identities
		\begin{align}\label{prodsine}
			\sin\left(\dfrac{ \pi}{8}\right) \sin\left(\dfrac{2 \pi}{8}\right) \sin\left(\dfrac{ 3\pi}{8}\right)= \dfrac{1}{4}
		\end{align}
		and
		\begin{align*}
			(1-y) (1-ye^{\frac{8\pi i}{8}}) &(1-ye^{\frac{2\pi i}{8}}) (1-ye^{\frac{-2\pi i}{8}}) (1-ye^{\frac{4\pi i}{8}}) (1-ye^{\frac{-4\pi i}{8}})\\
			& \times (1-ye^{\frac{6\pi i}{8}}) (1-ye^{\frac{-6\pi i}{8}})= (1-y^{8}),
		\end{align*} one can deduce the following:
		\begin{align}\label{tm}
			\theta_1\left(\frac{\pi }{8}|\tau\right) \theta_1\left(\frac{2\pi }{8}|\tau\right) \theta_1\left(\frac{3\pi }{8}|\tau\right)  = \dfrac{2  \eta^3(\tau) \eta(8\tau)}{\eta(2\tau)}.
		\end{align}
		Let
		\begin{align*}
			&\beta_1 := -2 \cos \left(\dfrac{2\pi}{8}\right) = - \sqrt{2} \qquad \beta_2 := -2 \cos \left(\dfrac{4\pi}{8}\right) = 0,\\ \text{and }
			&\beta_3 := -2 \cos \left(\dfrac{6\pi}{8}\right) = \sqrt{2}.
		\end{align*}
		Using the above and the definition of $\eta(\tau)$, \eqref{t81}-\eqref{t83} can be rewritten as 
		\begin{align}\label{O1}
			\Gamma_1(q) := \prod_{n=1}^{\infty} (1+\beta_1q^n +q^{2n} ) = q^{-\frac{1}{12}} \dfrac{\theta_1(\frac{ \pi}{8}|\tau)}{2\eta(\tau)(\sin\frac{\pi}{8})},
		\end{align} 
		\begin{align}\label{O2}
			\Gamma_2(q) := \prod_{n=1}^{\infty} (1+\beta_2q^n +q^{2n} ) = q^{-\frac{1}{12}} \dfrac{\theta_1(\frac{ 2\pi}{8}|\tau)}{2\eta(\tau)(\sin\frac{2\pi}{8})},
		\end{align} and 
		\begin{align}\label{O3}
			\Gamma_3(q) := \prod_{n=1}^{\infty} (1+\beta_3q^n +q^{2n} ) = q^{-\frac{1}{12}} \dfrac{\theta_1(\frac{3 \pi}{8}|\tau)}{2\eta(\tau)(\sin\frac{3\pi}{8})}.
		\end{align}
		Multiplying the above three identities, using the identities \eqref{prodsine} and \eqref{tm}, the following identity is obtained.
		\begin{align}\label{prodK}
			\Gamma_1(q) \Gamma_2(q) \Gamma_3(q)  = q^{-\frac{1}{4}} \dfrac{\eta(8\tau)}{\eta(2\tau)}.
		\end{align} 
		With these, we give our main results here.
		
		\begin{theorem}
			Let $\Gamma_1$  and $\Gamma_3$ be defined as in \eqref{O1} and\eqref{O3} respectively, where $\beta_1 = - \sqrt{2}$ and $\beta_3 = \sqrt{2}$. Then, we have the following: 
			\begin{enumerate}[(i)]
				\item
				\begin{align}\label{O1-O3}
					\nonumber	\Gamma_1^2(q^{1/2}) & \Gamma_3(q^{1/2}) - 	\Gamma_1(q^{1/2})  \Gamma_3^2(q^{1/2}) \\ &=  \dfrac{2\sqrt{2}q^{-3/32} \eta^{1/2}(4\tau) \eta^{1/4}(\tau) \eta(8\tau)}{\eta(\tau/2) \eta^{3/4}(2\tau)} \sqrt{\mathfrak{H}(q)} \sqrt[4]{\mathfrak{I}(q)}.
				\end{align}
				\item 
				\begin{align}\label{O1+O3}
					\nonumber	\Gamma_1^2(q^{1/2}) & \Gamma_3(q^{1/2}) + 	\Gamma_1(q^{1/2}) \Gamma_3^2(q^{1/2}) \\ &= \dfrac{2q^{-3/32} \eta^{1/2}(4\tau) \eta^{1/4}(\tau) \eta(8\tau)}{\eta(\tau/2) \eta^{3/4}(2\tau)} \left(\dfrac{\sqrt[4]{\mathfrak{I}(q)}}{\sqrt{\mathfrak{H}(q)}}-\sqrt{\mathfrak{H}(q)}\sqrt[4]{\mathfrak{I}(q)}\right).
				\end{align}
				\item 	
				\begin{align}\label{33O3-11O1}
					\nonumber (1+\beta_3)	\Gamma_1(q^{1/2}) & \Gamma_3^2(q^{1/2}) - (1+\beta_1)	\Gamma_1^2(q^{1/2})  \Gamma_3(q^{1/2}) \\ &= \dfrac{2\sqrt{2}q^{-3/32} \eta^{1/2}(4\tau) \eta^{1/4}(\tau) \eta(8\tau)}{\eta(\tau/2) \eta^{3/4}(2\tau)}\dfrac{\sqrt[4]{\mathfrak{I}(q)}}{\sqrt{\mathfrak{H}(q)}}.
				\end{align}
				\item 	
				\begin{align}\label{11O1+33O3}
					\nonumber (1+\beta_1)	\Gamma_1^2(q^{1/2}) \Gamma_3(q^{1/2}) &+ (1+\beta_3)	\Gamma_1(q^{1/2})  \Gamma_3^2(q^{1/2}) \\ &=  \dfrac{2q^{-3/32} \eta^{1/2}(4\tau) \eta^{1/4}(\tau) \eta(8\tau)}{\eta(\tau/2) \eta^{3/4}(2\tau)} \left(\dfrac{\sqrt[4]{\mathfrak{I}(q)}}{\sqrt{\mathfrak{H}(q)}}+\sqrt{\mathfrak{H}(q)}\sqrt[4]{\mathfrak{I}(q)}\right).
				\end{align}
				\item 	
				\begin{align}\label{3O3-1O1}
					\nonumber \beta_3 &	\Gamma_1(q^{1/2})  \Gamma_3^2(q^{1/2}) - \beta_1	\Gamma_1^2(q^{1/2})  \Gamma_3(q^{1/2}) \\ &=  \dfrac{2\sqrt{2}q^{-3/32} \eta^{1/2}(4\tau) \eta^{1/4}(\tau) \eta(8\tau)}{\eta(\tau/2) \eta^{3/4}(2\tau)} \left(\dfrac{\sqrt[4]{\mathfrak{I}(q)}}{\sqrt{\mathfrak{H}(q)}}-\sqrt{\mathfrak{H}(q)}\sqrt[4]{\mathfrak{I}(q)}\right).
				\end{align}
				\item 	
				\begin{align}\label{1O1+3O3}
					\nonumber \beta_1 	\Gamma_1^2(q^{1/2})  \Gamma_3(q^{1/2}) &+ \beta_3	\Gamma_1(q^{1/2})  \Gamma_3^2(q^{1/2}) \\ &= \dfrac{4 q^{-3/32} \eta^{1/2}(4\tau) \eta^{1/4}(\tau) \eta(8\tau)}{\eta(\tau/2) \eta^{3/4}(2\tau)} \sqrt{\mathfrak{H}(q)} \sqrt[4]{\mathfrak{I}(q)}.
				\end{align}
			\end{enumerate}
			Here, $\mathfrak{H}(q)$ and $\mathfrak{I}(q)$ are Ramanujan-G\"ollnitz-Gordon continued fraction  and the continued fraction of order four given by \eqref{H} and \eqref{I} respectively. 
		\end{theorem}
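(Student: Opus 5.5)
The plan is to factor the left-hand sides and reduce everything to two theta-type sums. Set $\zeta=e^{\pi i/4}$. Since $\beta_1=-(\zeta+\zeta^{-1})$ and $\beta_3=-(\zeta^3+\zeta^{-3})$, we have
\begin{align*}
\Gamma_1(q)=(\zeta q;q)_\infty(\zeta^{-1}q;q)_\infty, \qquad \Gamma_3(q)=(\zeta^{3} q;q)_\infty(\zeta^{-3}q;q)_\infty .
\end{align*}
The four numbers $\zeta^{\pm1},\zeta^{\pm3}$ are exactly the roots of $y^4=-1$, so
\begin{align*}
\Gamma_1(q)\Gamma_3(q)=\prod_{n\ge1}(1+q^{4n})=(-q^4;q^4)_\infty .
\end{align*}
Hence every left-hand side in the theorem is $(-q^2;q^2)_\infty\,\bigl(a\,\Gamma_1(q^{1/2})+b\,\Gamma_3(q^{1/2})\bigr)$ for suitable constants $a,b$. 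The theorem therefore reduces to finding closed forms for $\Gamma_1(q^{1/2})\pm\Gamma_3(q^{1/2})$.

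\textbf{Step 1: theta representation.} By the Jacobi triple product,
\begin{align*}
(1-\zeta)\Gamma_1(q)=\frac{f(-\zeta,-q/\zeta)}{(q;q)_\infty},
\end{align*}
and likewise for $\Gamma_3$ with $\zeta$ replaced by $\zeta^3$. Apply \eqref{f4} with $\gamma=\zeta$, $\delta=q/\zeta$. This gives
\begin{align*}
f(-\zeta,-q/\zeta)=f(iq,-iq^3)-\zeta f(-iq,iq^3),
\end{align*}
and the analogous formula with $\zeta^3$ in place of $\zeta$; there $i$ is replaced by $-i$, which is complex conjugation of the coefficients.

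\textbf{Step 2: parity dissection.} Split $f(\pm iq,\mp iq^3)=\sum_j (\pm i)^{j^2}(\mp1)^{\dots}q^{2j^2\mp j}$ according to the parity of $j$. This separates real and imaginary parts, each of which is a theta function of the shape $f(\pm q^{a},\pm q^{16-a})$ with $a\in\{1,3,5,7\}$ after the substitution $q\to q^{1/2}$, up to eta-quotient factors. Combining with the prefactors $1/(1-\zeta)$ and $1/(1-\zeta^3)$ is elementary trigonometry, since $|1-\zeta|=2\sin\frac{\pi}{8}$ and $|1-\zeta^3|=2\sin\frac{3\pi}{8}$. The expected outcome is that $\Gamma_1(q^{1/2})+\Gamma_3(q^{1/2})$ and $\Gamma_1(q^{1/2})-\Gamma_3(q^{1/2})$ become, respectively, a difference $f(-q^3,-q^5)-q^{1/2}f(-q,-q^7)$ and a single multiple of $q^{1/2}f(-q,-q^7)$ (or the reverse), divided by a common eta-quotient. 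I would identify the exact form by comparing the first few $q$-coefficients.

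\textbf{Step 3: conversion to $\mathfrak H$ and $\mathfrak I$.} Write $f(-q,-q^7)=\sqrt{\mathfrak H(q)}\,q^{-1/4}\sqrt{P}$ and $f(-q^3,-q^5)=q^{1/4}\sqrt{P}/\sqrt{\mathfrak H(q)}$, where $P=f(-q,-q^7)f(-q^3,-q^5)$. By \eqref{f1}, $P=f(-q,-q^3)\,\psi(q^4)$-type products, and these are eta-quotients. The factor $f(-q,-q^3)$ inside $\sqrt P$ combines with the remaining eta factors to produce $\sqrt[4]{\mathfrak I(q)}$, using $\mathfrak I(q)=f(-q,-q^3)/\varphi(-q^2)$ written as an eta quotient. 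Collecting all eta powers should yield the common prefactor
\begin{align*}
q^{-3/32}\eta^{1/2}(4\tau)\eta^{1/4}(\tau)\eta(8\tau)/\bigl(\eta(\tau/2)\eta^{3/4}(2\tau)\bigr).
\end{align*}
The $q$-powers are checked against $q^{-1/12}$ from \eqref{O1}--\eqref{O3}, with $q\to q^{1/2}$. This proves (i) and (ii).

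\textbf{Step 4: parts (iii)--(vi).} These follow linearly from (i) and (ii) because $\beta_1=-\sqrt2$ and $\beta_3=\sqrt2$. For instance, (v) is $\sqrt2$ times (ii), (vi) is $-\sqrt2$ times (i) (the sign convention to be matched), and (iii), (iv) come from adding or subtracting $\sqrt2$ times one identity to the other.

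\textbf{Main obstacle.} I expect the main obstacle to be Steps 2--3: correctly tracking the eighth roots of unity and the parity dissection so that the imaginary parts cancel, and then bookkeeping the fractional eta powers and $q$-exponents. I would verify this by series expansion to low order before finalizing.
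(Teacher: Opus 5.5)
There is a genuine gap, but it is in what you claim to prove, not in your method. Part (i) is false as printed, so Step 3 cannot ``prove (i)'' and Step 4 cannot derive (iii), (iv), (vi) from it. For $0<q<1$, each factor $1-\sqrt{2}x+x^2$ of $\Gamma_1$ is positive and strictly smaller than the matching factor $1+\sqrt{2}x+x^2$ of $\Gamma_3$. Also $\Gamma_1\Gamma_3(q^{1/2})=(-q^2;q^2)_\infty>0$. Hence the left side of (i) is negative, while its right side is positive.

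Your Steps 1--2 already give the correct sign if you carry them out exactly instead of matching coefficients. The dissection is
\[
f(\pm iq,\mp iq^3)=\sum_j(\pm i)^jq^{2j^2-j}=f(-q^6,-q^{10})\pm iq\,f(-q^2,-q^{14}),
\]
and with $\frac{1+\zeta}{1-\zeta}=i\cot\frac{\pi}{8}$ and $\frac{1+\zeta^3}{1-\zeta^3}=i\cot\frac{3\pi}{8}$ you get
\begin{align*}
(q;q)_\infty\Gamma_1(q)&=f(-q^6,-q^{10})-(1+\sqrt{2})\,q\,f(-q^2,-q^{14}),\\
(q;q)_\infty\Gamma_3(q)&=f(-q^6,-q^{10})+(\sqrt{2}-1)\,q\,f(-q^2,-q^{14}).
\end{align*}
Therefore $(q;q)_\infty(\Gamma_1-\Gamma_3)=-2\sqrt{2}\,q\,f(-q^2,-q^{14})$ and $(q;q)_\infty(\Gamma_1+\Gamma_3)=2f(-q^6,-q^{10})-2q\,f(-q^2,-q^{14})$. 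After $q\to q^{1/2}$ and your Step 3, the second formula gives (ii), but the first gives the \emph{negative} of the right side of (i). Your remark ``(the sign convention to be matched)'' cannot be honoured. The left side of (vi) is $-\sqrt{2}$ times the left side of (i), but the printed right side of (vi) is $+\sqrt{2}$ times the right side of (i). So the six printed identities are mutually inconsistent.

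What your argument actually proves is (i) with the sign reversed. Write $A=\Gamma_1^2\Gamma_3(q^{1/2})$ and $B=\Gamma_1\Gamma_3^2(q^{1/2})$. The left sides of (iii), (iv), (v), (vi) are then $(B-A)+\sqrt{2}(A+B)$, $(A+B)+\sqrt{2}(B-A)$, $\sqrt{2}(A+B)$ and $\sqrt{2}(B-A)$. The corrected (i) together with (ii) gives exactly the printed (iii)--(vi); with the printed (i) they do not.

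The paper's proof contains the same slip. By \eqref{fab1}, the bracket in \eqref{3.1.1} equals $-f(-q^2,-q^{14})$, so \eqref{3.1.2}--\eqref{3.1.4} are each missing a minus sign. Its separate computations for (iii) and (v) are correct.

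Apart from this, your route genuinely differs from the paper's and is cleaner. The paper expands $\theta_1$ as a sine series, tabulates the coefficients $\mathfrak{B}_j(k)$ modulo $8$ by computer, regroups them into four theta series and recombines these with \eqref{f4}, and does this three times. You obtain $\Gamma_1$ and $\Gamma_3$ individually from the triple product, one use of \eqref{f4} at $\gamma=\zeta$, and a parity split. You then reduce all six parts to linear combinations of two identities. This is shorter, needs no machine computation, and makes the linear dependence explicit, which is exactly what exposes the sign error.
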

		\begin{proof}[Proof of \eqref{O1-O3}]
			Consider 
			\begin{align*}
				\Gamma_1(q) - \Gamma_3(q) &= \prod_{k=1}^{\infty} (1+\beta_1 q^k +q^{2k}) - \prod_{k=1}^{\infty} (1+\beta_3 q^k +q^{2k})\\
				&= \dfrac{q^{-1/12}}{\eta(\tau)} \left(\dfrac{\theta_1\left(\frac{\pi}{8}|\tau\right)}{2 \sin \left(\frac{\pi}{8}\right)}-\dfrac{\theta_1\left(\frac{3\pi}{8}|\tau\right)}{2 \sin \left(\frac{3\pi}{8}\right)}\right).
			\end{align*}
			Using the identity \eqref{t1}, the above identity can be written as
			\begin{align}\label{3.1.0}
				\Gamma_1(q) - \Gamma_3(q) 
				&= \dfrac{q^{-1/12}}{\eta(\tau)} \sum_{k=0}^{\infty} (-1)^k \mathfrak{B}_1(k) q^{\frac{(2k+1)^2}{8}},
			\end{align} where
			\begin{align*}
				\mathfrak{B}_1(k)= \dfrac{\sin (2k+1) \frac{\pi}{8}}{\sin \frac{\pi}{8}}- \dfrac{\sin (2k+1) \frac{3\pi}{8}}{\sin \frac{3\pi}{8}}.
			\end{align*}
			Using MAPLE computations, one can see that
			\begin{align*}
				\mathfrak{B}_1(8l+0) =0,  &\qquad
				\mathfrak{B}_1(8l+1)=2\sqrt{2}, \\
				\mathfrak{B}_1(8l+2)=2\sqrt{2}, &\qquad
				\mathfrak{B}_1(8l+3)=0, \\
				\mathfrak{B}_1(8l+4)=0, &\qquad
				\mathfrak{B}_1(8l+5)=-2\sqrt{2}, \\
				\mathfrak{B}_1(8l+6)=-2\sqrt{2}, &\qquad
				\mathfrak{B}_1(8l+7)=0.
			\end{align*}
			Thus,
			\begin{align*}
				\sum_{k=0}^{\infty} (-1)^k \mathfrak{B}_1(k) q^{\frac{(2k+1)^2}{8}} =&  2\sqrt{2}\left\{-\sum_{l=0}^{\infty} q^{\frac{(16l+3)^2}{8}} 
				+\sum_{l=0}^{\infty}  q^{\frac{(16l+5)^2}{8}}+\sum_{l=0}^{\infty}  q^{\frac{(16l+11)^2}{8}}\right. \\
				&\left.-\sum_{l=0}^{\infty}  q^{\frac{(16l+13)^2}{8}}\right\}.
			\end{align*}
			Changing $l$ to $-l$ and $l$ to $l-1$ in the first and last two summations respectively, above equation deduces to 
			
			\begin{align*}
				\sum_{k=0}^{\infty} (-1)^k \mathfrak{B}_1(k) q^{\frac{(2k+1)^2}{8}} =  2\sqrt{2}\left\{-\sum_{l=-\infty}^{\infty} q^{\frac{(16l+3)^2}{8}} 
				+\sum_{l=-\infty}^{\infty}  q^{\frac{(16l+5)^2}{8}} \right\}.
			\end{align*}
			Using the definition of $f(\gamma, \delta)$ in the above, the above expression reduces to 
			\begin{align}\label{3.1.1}
				\sum_{k=0}^{\infty} (-1)^k \mathfrak{B}_1(k) q^{\frac{(2k+1)^2}{8}} = 2\sqrt{2}q^{9/8} \left\{ q^2f(q^{12},q^{52})- f(q^{20}, q^{44}) \right\}.
			\end{align}
			Setting $(\gamma,\delta)=(q^2,q^{14})$ in \eqref{f4} yields   the following equation:
			\begin{align}\label{fab1}
				f(-q^2,-q^{14}) &= f(q^{20}, q^{44}) -q^2 f(q^{12}, q^{52}).
			\end{align}
			Using these in \eqref{3.1.1}, 
			\begin{align}\label{3.1.2}
				\sum_{k=0}^{\infty} (-1)^k \mathfrak{B}_1(k) q^{\frac{(2k+1)^2}{8}} = 2\sqrt{2}q^{9/8} f(-q^2,-q^{14}).
			\end{align}
			Utilizing \eqref{3.1.2} in \eqref{3.1.0}, one can see that
			\begin{align}\label{3.1.3}
				\Gamma_1(q) - \Gamma_3(q) 
				= \dfrac{2\sqrt{2}q^{25/24}}{\eta(\tau)} f(-q^2,-q^{14}).
			\end{align}
			Multiplying on both the sides of \eqref{3.1.3} by \eqref{prodK},
			\begin{align}\label{3.1.4}
				\Gamma_1^2(q)\Gamma_2(q) \Gamma_3(q) - \Gamma_1(q) \Gamma_2(q) \Gamma_3^2(q)
				= \dfrac{2\sqrt{2}q^{19/24}\eta(8\tau)}{\eta(\tau)\eta(2\tau)}f(-q^2,-q^{14}).
			\end{align}
			Utilizing the fact that $\beta_2=0$ on the left hand side of \eqref{3.1.4} and noticing that $(-q^2;q^2)_\infty = q^{-2/24}\dfrac{\eta(4\tau)}{\eta(2\tau)}$, upon simplification, changing $q$ to $q^{1/2}$ throughout and  using the definitions of $\mathfrak{H}$, $\mathfrak{I}$ and $\eta$, identity \eqref{O1-O3} is established.
			The proof of the identity \eqref{O1+O3} is analogous to the above and hence is omitted. 
		\end{proof}
		
		\begin{proof}[Proof of \eqref{33O3-11O1}]
			Consider 
			\begin{align*}
				(1+\beta_3)	\Gamma_3(q) &-(1+\beta_1) \Gamma_1(q)\\ &= (1+\beta_3)\prod_{k=1}^{\infty} (1+\beta_3 q^k +q^{2k}) - (1+\beta_1)\prod_{k=1}^{\infty} (1+\beta_1 q^k +q^{2k})\\
				&= \dfrac{q^{-1/12}}{\eta(\tau)} \left(\dfrac{(1+\beta_3)\theta_1\left(\frac{3\pi}{8}|\tau\right)}{2 \sin \left(\frac{3\pi}{8}\right)}-\dfrac{(1+\beta_1)\theta_1\left(\frac{\pi}{8}|\tau\right)}{2 \sin \left(\frac{\pi}{8}\right)}\right).
			\end{align*}
			Using the identity \eqref{t1}, the above identity can be written as
			\begin{align}\label{3.1.5}
				(1+\beta_3)	\Gamma_3(q) -(1+\beta_1) \Gamma_1(q) &=
				\dfrac{q^{-1/12}}{\eta(\tau)} \sum_{k=0}^{\infty} (-1)^k \mathfrak{B}_2(k) q^{\frac{(2k+1)^2}{8}},
			\end{align} where
			\begin{align*}
				\mathfrak{B}_2(k)= \dfrac{(1+\beta_3)\sin (2k+1) \frac{3\pi}{8}}{\sin \frac{3\pi}{8}}- \dfrac{(1+\beta_1)\sin (2k+1) \frac{\pi}{8}}{\sin \frac{\pi}{8}}.
			\end{align*}
			Using MAPLE computations, one can see that
			\begin{align*}
				\mathfrak{B}_2(8l+0) =2\sqrt{2},  &\qquad
				\mathfrak{B}_2(8l+1)=0, \\
				\mathfrak{B}_2(8l+2)=0, &\qquad
				\mathfrak{B}_2(8l+3)=2\sqrt{2}, \\
				\mathfrak{B}_2(8l+4)=-2\sqrt{2}, &\qquad
				\mathfrak{B}_2(8l+5)=0, \\
				\mathfrak{B}_2(8l+6)=0, &\qquad
				\mathfrak{B}_2(8l+7)=-2\sqrt{2}.
			\end{align*}
			Thus,
			\begin{align*}
				\sum_{k=0}^{\infty} (-1)^k \mathfrak{B}_2(k) q^{\frac{(2k+1)^2}{8}} =&  2\sqrt{2}\left\{\sum_{l=0}^{\infty} q^{\frac{(16l+1)^2}{8}} 
				-\sum_{l=0}^{\infty}  q^{\frac{(16l+7)^2}{8}}-\sum_{l=0}^{\infty}  q^{\frac{(16l+9)^2}{8}}\right. \\
				&\left.+\sum_{l=0}^{\infty}  q^{\frac{(16l+15)^2}{8}}\right\}.
			\end{align*}
			Changing $l$ to $-l$ and $l$ to $l-1$ in the first and last two summations respectively, above equation deduces to 
			
			\begin{align*}
				\sum_{k=0}^{\infty} (-1)^k \mathfrak{B}_2(k) q^{\frac{(2k+1)^2}{8}} =  2\sqrt{2}\left\{\sum_{l=-\infty}^{\infty} q^{\frac{(16l-1)^2}{8}} 
				-\sum_{l=-\infty}^{\infty}  q^{\frac{(16l-7)^2}{8}} \right\}.
			\end{align*}
			Using the definition of $f(\gamma,\delta)$, the above expression reduces to 
			\begin{align}\label{3.1.6}
				\sum_{k=0}^{\infty} (-1)^k \mathfrak{B}_2(k) q^{\frac{(2k+1)^2}{8}} = 2\sqrt{2}q^{1/8} \left\{  f(q^{28},q^{36})- q^6f(q^{4}, q^{60}) \right\}.
			\end{align}
			Setting $(\gamma,\delta)=(q^6,q^{10})$ in \eqref{f4} yields   the following equation:
			\begin{align}\label{fab2}
				f(-q^6,-q^{10}) &= f(q^{28}, q^{36}) -q^6 f(q^{4}, q^{60}).
			\end{align}
			Using these in \eqref{3.1.6}, 
			\begin{align}\label{3.1.7}
				\sum_{k=0}^{\infty} (-1)^k \mathfrak{B}_2(k) q^{\frac{(2k+1)^2}{8}} = 2\sqrt{2}q^{1/8} f(-q^6,-q^{10}).
			\end{align}
			Utilizing \eqref{3.1.7} in \eqref{3.1.5}, one can see that
			\begin{align}\label{3.1.8}
				(1+\beta_3)	\Gamma_3(q) -(1+\beta_1) \Gamma_1(q) 
				= \dfrac{2\sqrt{2}q^{1/24}}{\eta(\tau)} f(-q^6,-q^{10}).
			\end{align}
			Multiplying on both the sides of \eqref{3.1.8} by \eqref{prodK},
			\begin{align}\label{3.1.9}
				\nonumber	(1+\beta_3)	\Gamma_1(q)\Gamma_2(q) \Gamma_3^2(q) &- (1+\beta_1)\Gamma_1^2(q) \Gamma_2(q) \Gamma_3(q)\\
				&= \dfrac{2\sqrt{2}q^{-5/24}\eta(8\tau)}{\eta(\tau)\eta(2\tau)}f(-q^6,-q^{10}).
			\end{align}
			Utilizing the fact that $\beta_2=0$ on the left hand side of \eqref{3.1.9} and noticing that $(-q^2;q^2)_\infty = q^{-2/24}\dfrac{\eta(4\tau)}{\eta(2\tau)}$, upon simplification, changing $q$ to $q^{1/2}$ throughout and using the definitions of $\mathfrak{H}$, $\mathfrak{I}$ and $\eta$, identity \eqref{33O3-11O1} is established.
			The proof of the identity \eqref{11O1+33O3} is analogous to the above and hence is omitted.
		\end{proof}
		
		\begin{proof}[Proof of \eqref{3O3-1O1}]
			Consider 
			\begin{align*}
				\beta_3	\Gamma_3(q) &-\beta_1 \Gamma_1(q)\\ &=  \beta_3\prod_{k=1}^{\infty} (1+\beta_3 q^k +q^{2k}) - \beta_1\prod_{k=1}^{\infty} (1+\beta_1 q^k +q^{2k})\\
				&= \dfrac{q^{-1/12}}{\eta(\tau)} \left(\dfrac{\beta_3\theta_1\left(\frac{3\pi}{8}|\tau\right)}{2 \sin \left(\frac{3\pi}{8}\right)}-\dfrac{\beta_1\theta_1\left(\frac{\pi}{8}|\tau\right)}{2 \sin \left(\frac{\pi}{8}\right)}\right).
			\end{align*}
			Using the identity \eqref{t1}, the above identity can be written as
			\begin{align}\label{3.1.10}
				\beta_3	\Gamma_3(q) -\beta_1 \Gamma_1(q) &=
				\dfrac{q^{-1/12}}{\eta(\tau)} \sum_{k=0}^{\infty} (-1)^k \mathfrak{B}_3(k) q^{\frac{(2k+1)^2}{8}},
			\end{align} where
			\begin{align*}
				\mathfrak{B}_3(k)= \dfrac{\beta_3\sin (2k+1) \frac{3\pi}{8}}{\sin \frac{3\pi}{8}}- \dfrac{\beta_1\sin (2k+1) \frac{\pi}{8}}{\sin \frac{\pi}{8}}.
			\end{align*}
			Using MAPLE computations, one can see that
			\begin{align*}
				\mathfrak{B}_3(8l+0) =2\sqrt{2},  &\qquad
				\mathfrak{B}_3(8l+1)=2\sqrt{2}, \\
				\mathfrak{B}_3(8l+2)=2\sqrt{2}, &\qquad
				\mathfrak{B}_3(8l+3)=2\sqrt{2}, \\
				\mathfrak{B}_3(8l+4)=-2\sqrt{2}, &\qquad
				\mathfrak{B}_3(8l+5)=-2\sqrt{2}, \\
				\mathfrak{B}_3(8l+6)=-2\sqrt{2}, &\qquad
				\mathfrak{B}_3(8l+7)=-2\sqrt{2}.
			\end{align*}
			Thus,
			\begin{align*}
				&	\sum_{k=0}^{\infty} (-1)^k \mathfrak{B}_3(k) q^{\frac{(2k+1)^2}{8}} =  2\sqrt{2}\left\{\sum_{l=0}^{\infty} q^{\frac{(16l+1)^2}{8}} 
				-\sum_{l=0}^{\infty} q^{\frac{(16l+3)^2}{8}}+\sum_{l=0}^{\infty} q^{\frac{(16l+5)^2}{8}}\right. \\
				&\left.-\sum_{l=0}^{\infty}  q^{\frac{(16l+7)^2}{8}}-\sum_{l=0}^{\infty}  q^{\frac{(16l+9)^2}{8}}+\sum_{l=0}^{\infty} q^{\frac{(16l+11)^2}{8}}-\sum_{l=0}^{\infty} q^{\frac{(16l+13)^2}{8}}+\sum_{l=0}^{\infty}  q^{\frac{(16l+15)^2}{8}}\right\}.
			\end{align*}
			Changing $l$ to $-l$ and $l$ to $l-1$ in the first and last four summations respectively, above equation deduces to 
			
			\begin{align*}
				\sum_{k=0}^{\infty} (-1)^k \mathfrak{B}_3(k) q^{\frac{(2k+1)^2}{8}} &=  2\sqrt{2}\left\{\sum_{l=-\infty}^{\infty} q^{\frac{(16l-1)^2}{8}} 
				-\sum_{l=-\infty}^{\infty} q^{\frac{(16l-3)^2}{8}}+\sum_{l=-\infty}^{\infty} q^{\frac{(16l-5)^2}{8}}\right. \\
				&\left.-\sum_{l=-\infty}^{\infty}  q^{\frac{(16l-7)^2}{8}} \right\}.
			\end{align*}
			Using the definition of $f(\gamma,\delta)$, the above expression reduces to 
			\begin{align}\label{3.1.11}
				\nonumber	\sum_{k=0}^{\infty} (-1)^k \mathfrak{B}_3(k) q^{\frac{(2k+1)^2}{8}} &= 2\sqrt{2}q^{1/8} \left\{  f(q^{28},q^{36})-qf(q^{20},q^{44}) \right. \\ & \left. +q^3f(q^{12},q^{52})- q^6f(q^{4}, q^{60}) \right\}.
			\end{align}
			Using \eqref{fab1} and \eqref{fab2} in \eqref{3.1.11}, 
			\begin{align}\label{3.1.12}
				\sum_{k=0}^{\infty} (-1)^k \mathfrak{B}_3(k) q^{\frac{(2k+1)^2}{8}} = 2\sqrt{2}q^{1/8} \left(f(-q^6,-q^{10})-qf(-q^2,-q^{14})\right).
			\end{align}
			Utilizing \eqref{3.1.12} in \eqref{3.1.10}, one can see that
			\begin{align}\label{3.1.13}
				\beta_3	\Gamma_3(q) -\beta_1\Gamma_1(q) 
				= \dfrac{2\sqrt{2}q^{1/24}}{\eta(\tau)} \left(f(-q^6,-q^{10})-qf(-q^2,-q^{14})\right).
			\end{align}
			Multiplying on both the sides of \eqref{3.1.12} by \eqref{prodK},
			\begin{align}\label{3.1.14}
				\nonumber	\beta_3	\Gamma_1(q)\Gamma_2(q) \Gamma_3^2(q) &- \beta_1\Gamma_1^2(q) \Gamma_2(q) \Gamma_3(q)\\
				&= \dfrac{2\sqrt{2}q^{-5/24}\eta(8\tau)}{\eta(\tau)\eta(2\tau)}\left(f(-q^6,-q^{10})-qf(-q^2,-q^{14})\right).
			\end{align}
			Utilizing the fact that $\beta_2=0$ on the left hand side of \eqref{3.1.14} and noticing that $(-q^2;q^2)_\infty = q^{-2/24}\dfrac{\eta(4\tau)}{\eta(2\tau)}$, upon simplification, changing $q$ to $q^{1/2}$ throughout and using the definitions of $\mathfrak{H}$, $\mathfrak{I}$ and $\eta$, identity \eqref{3O3-1O1} is established.
			The proof of the identity \eqref{1O1+3O3} is analogous to the above and hence is omitted.
		\end{proof}

		\section{Lambert series identities using Ramanujan's $_1\psi_1$ summation formula}
		The theory of basic hypergeometric series facilitates a natural framework for studying Ramanujan's continued fractions and related $q$-series identities. Classical summation and transformation formulas reveal deep connections between continued fractions, theta functions, Lambert series, and modular forms. Ramanujan's bilateral summation formula $_1 \psi _1$ acquires a prominent role because of its application in deriving Lambert series expansions and theta-function identities.
		Ramanujan's continued fractions of various orders exhibit rich arithmetic and analytic structures, many of which can be understood through hypergeometric techniques. In this section, we explore new connections between the continued fraction of order twenty and the Lambert series. 
		We use Ramanujan's $_1 \psi _1$ summation formula \cite[Entry 17, p. 32]{Adiga_1985} given by 
		\begin{align}\label{e1}
			\sum_{j=-\infty}^{\infty} \dfrac{z^j}{1-xq^j} = \dfrac{(xz,q/xz,q,q;q)_\infty}{(x,q/x,z,q/z;q)_\infty}, \qquad |q|<|z|<1,
		\end{align} to establish Lambert series identities.
		\begin{theorem}
			For $|q|<1$, the following identity holds.
			\begin{align}\label{E1}
				\sum_{\substack{m=1 \\ m\equiv 1 \pmod 2}}^{\infty} \dfrac{q^{m} +q^{3m}}{1-q^{8m}} - \sum_{\substack{m=1 \\ m\equiv 1 \pmod 2}}^{\infty} \dfrac{q^{5m} +q^{7m}}{1-q^{8m}} = \dfrac{\eta^4(16\tau)}{\eta^2(8\tau)} \left[\dfrac{1}{\mathfrak{H}(q^2)}+\mathfrak{H}(q^2)\right].
			\end{align}
		\end{theorem}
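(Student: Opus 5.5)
We will reduce both sides to the same infinite product, using Ramanujan's ${}_1\psi_1$ formula \eqref{e1} with base $q^{16}$ on the left.

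\emph{Left-hand side.} First we rewrite it as two bilateral Lambert series. For odd $m>0$, the index $-m$ contributes
\[
\frac{q^{-m}}{1-q^{-8m}}=-\frac{q^{7m}}{1-q^{8m}}
\quad\text{and}\quad
\frac{q^{-3m}}{1-q^{-8m}}=-\frac{q^{5m}}{1-q^{8m}}.
\]
Summing over all odd $m=2j+1$, $j\in\mathbb{Z}$, therefore gives
\begin{align*}
\sum_{j=-\infty}^{\infty}\frac{q^{2j+1}}{1-q^{16j+8}}&=\sum_{\substack{m\ge1\\ m \text{ odd}}}\frac{q^m-q^{7m}}{1-q^{8m}},\\
\sum_{j=-\infty}^{\infty}\frac{q^{3(2j+1)}}{1-q^{16j+8}}&=\sum_{\substack{m\ge1\\ m \text{ odd}}}\frac{q^{3m}-q^{5m}}{1-q^{8m}}.
\end{align*}
Hence the left-hand side of \eqref{E1} equals
\[
q\sum_j \frac{(q^2)^j}{1-q^8(q^{16})^j}+q^3\sum_j\frac{(q^6)^j}{1-q^8(q^{16})^j}.
\]
Each bilateral series converges absolutely, so the rearrangement is legitimate for $|q|<1$.

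\emph{Applying \eqref{e1}.} We take \eqref{e1} with $q\to q^{16}$ and $x=q^8$, once with $z=q^2$ and once with $z=q^6$. The condition $|q^{16}|<|z|<1$ holds in both cases. All products below are to base $q^{16}$. Writing
\[
A=(q^6,q^{10};q^{16})_\infty,\qquad B=(q^2,q^{14};q^{16})_\infty,
\]
the two evaluations are
\[
\frac{(q^{16};q^{16})_\infty^2}{(q^8;q^{16})_\infty^2}\cdot\frac{A}{B}
\quad\text{and}\quad
\frac{(q^{16};q^{16})_\infty^2}{(q^8;q^{16})_\infty^2}\cdot\frac{B}{A}.
\]
By the product formula for $\psi$, the common prefactor is $(q^{16};q^{16})_\infty^2/(q^8;q^{16})_\infty^2=\psi^2(q^8)$. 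So the left-hand side becomes
\[
\psi^2(q^8)\left(q\frac{A}{B}+q^3\frac{B}{A}\right).
\]

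\emph{Matching the right-hand side.} By Jacobi's triple product,
\[
f(-q^2,-q^{14})=B\,(q^{16};q^{16})_\infty,\qquad f(-q^6,-q^{10})=A\,(q^{16};q^{16})_\infty.
\]
Definition \eqref{H} with $q\to q^2$ then gives $\mathfrak{H}(q^2)=qB/A$. Therefore
\[
q\frac{A}{B}+q^3\frac{B}{A}=q^2\left(\frac{1}{\mathfrak{H}(q^2)}+\mathfrak{H}(q^2)\right).
\]
On the other side, definition \eqref{de} gives
\[
\frac{\eta^4(16\tau)}{\eta^2(8\tau)}=q^{64/24-16/24}\frac{(q^{16};q^{16})_\infty^4}{(q^8;q^8)_\infty^2}=q^2\psi^2(q^8),
\]
where the last step uses $\psi(q^8)=(q^{16};q^{16})_\infty^2/(q^8;q^8)_\infty$. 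The two sides agree, which proves \eqref{E1}.

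As a cross-check, \eqref{H+H} with $q\to q^2$ turns the right-hand side into $q\,\varphi(q^2)\psi(q^8)$, but the argument above does not need this.

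The step most likely to cause trouble is the index bookkeeping in the first paragraph. One must pair the negative indices correctly so that exactly $-q^{7m}$ and $-q^{5m}$ appear, and then pick the right $x$ and $z$ so that the ${}_1\psi_1$ products collapse to $A/B$ and $B/A$. Everything after that is routine product manipulation.
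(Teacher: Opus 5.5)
Your proof is correct and is essentially the paper's argument. You fold the negative odd indices into the bilateral sums $\sum_j q^{2j+1}/(1-q^{16j+8})$ and $\sum_j q^{6j+3}/(1-q^{16j+8})$, apply \eqref{e1} with base $q^{16}$, $x=q^8$ and $z=q^2,q^6$, and identify the resulting products. Where the paper only cites ``the definitions,'' you explicitly verify the sign bookkeeping for negative indices and the identities $\eta^4(16\tau)/\eta^2(8\tau)=q^2\psi^2(q^8)$ and $\mathfrak{H}(q^2)=q\,(q^2,q^{14};q^{16})_\infty/(q^6,q^{10};q^{16})_\infty$.
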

		
		\begin{proof}
			Replacing $m$ by $-m$ in the second summation of the left-hand side of the identity \eqref{E1}, one can see that 
			\begin{align*}
				\sum_{\substack{m=1 \\ m\equiv 1 \pmod 2}}^{\infty} \dfrac{q^{m} +q^{3m}}{1-q^{8m}} &- \sum_{\substack{m=-\infty \\ m\equiv 1 \pmod 2}}^{-1} \dfrac{q^{-5m} +q^{-7m}}{1-q^{-8m}} \\ &= \sum_{m=-\infty}^{\infty} \dfrac{q^{2m+1}}{1-q^{16m+8}} +\sum_{m=-\infty}^{\infty} \dfrac{q^{6m+3}}{1-q^{16m+8}}.
			\end{align*}
			Utilizing \eqref{e1} in the above,	the following is deduced.
			\begin{align}\label{e2}
				\nonumber	\sum_{\substack{m=1 \\ m\equiv 1 \pmod 2}}^{\infty} &\dfrac{q^{m} +q^{3m}}{1-q^{8m}} - \sum_{\substack{m=-\infty \\ m\equiv 1 \pmod 2}}^{-1} \dfrac{q^{-5m} +q^{-7m}}{1-q^{-8m}} \\&= \dfrac{(q^{16};q^{16})_\infty^2}{(q^{8};q^{16})_\infty^2} \left[q\dfrac{ (q^6,q^{10};q^{16})_\infty}{(q^2,q^{14};q^{16})_\infty}+q^3\dfrac{ (q^2,q^{14};q^{16})_\infty}{(q^6,q^{10};q^{16})_\infty}\right].
			\end{align}
			Using the definitions of  $\mathfrak{H}$ and $\eta$  in \eqref{e2}, identity \eqref{E1} is obtained.  
		\end{proof}
		The following result is evident using \eqref{H+H} in the above.
		\begin{corollary}
			For $|q|<1$, the following identity holds.
			\begin{align}\label{E2}
				\sum_{\substack{m=1 \\ m\equiv 1 \pmod 2}}^{\infty} \dfrac{q^{m} +q^{3m}}{1-q^{8m}} - \sum_{\substack{m=1 \\ m\equiv 1 \pmod 2}}^{\infty} \dfrac{q^{5m} +q^{7m}}{1-q^{8m}} = \dfrac{\eta^4(16\tau)}{\eta^2(8\tau)} \left[\dfrac{\varphi(q^2)}{q\psi(q^8)}\right].
			\end{align}
		\end{corollary}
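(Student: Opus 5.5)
The plan is to obtain the corollary directly from Theorem \eqref{E1} by replacing the bracket $\frac{1}{\mathfrak{H}(q^2)}+\mathfrak{H}(q^2)$ with a theta quotient. The left-hand sides of \eqref{E1} and \eqref{E2} are identical, and so are the eta prefactors $\eta^4(16\tau)/\eta^2(8\tau)$. It therefore suffices to show
\begin{align*}
\frac{1}{\mathfrak{H}(q^2)}+\mathfrak{H}(q^2)=\frac{\varphi(q^2)}{q\,\psi(q^8)}.
\end{align*}

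The key step is to replace $q$ by $q^2$ in \eqref{H+H}. This sends $q^{1/2}$ to $q$ and $\psi(q^4)$ to $\psi(q^8)$, while $\varphi(q)$ becomes $\varphi(q^2)$. Substituting this into the right-hand side of \eqref{E1} gives \eqref{E2} at once.

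The only point needing care is that this substitution is legitimate. The identity \eqref{H+H} holds for all $|q|<1$ as an identity of analytic functions in $q^{1/2}$, and $|q^2|<1$, so it applies. The branch of $q^{1/2}$ must also be consistent: after the substitution it becomes $(q^2)^{1/2}=q$, which matches the factor $q\,(q^6,q^{10};q^{16})_\infty/(q^2,q^{14};q^{16})_\infty=\mathfrak{H}(q^2)$ appearing in \eqref{e2}.

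As an optional check, one could verify the product form directly, in the same spirit as the proof of \eqref{E1}. Writing $\mathfrak{H}(q^2)=q\,f(-q^2,-q^{14})/f(-q^6,-q^{10})$, combine the two terms over the common denominator $q\,f(-q^2,-q^{14})f(-q^6,-q^{10})$. Then compare with $\varphi(q^2)/(q\psi(q^8))$ using product representations via the Jacobi triple product. Since \eqref{H+H} is already available, I expect no real obstacle beyond this bookkeeping of exponents.
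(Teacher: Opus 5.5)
Your proposal is correct and is the paper's argument: the corollary is obtained from \eqref{E1} by applying \eqref{H+H} with $q$ replaced by $q^2$, which gives $1/\mathfrak{H}(q^2)+\mathfrak{H}(q^2)=\varphi(q^2)/(q\,\psi(q^8))$. Your branch remark is harmless, since multiplying \eqref{H+H} by $q^{1/2}$ turns it into an identity in integral powers of $q$, so the substitution $q\mapsto q^2$ is unambiguous.
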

		
		\begin{theorem}
			For $|q|<1$, the following identities hold:
			\begin{enumerate}[(i)]
				\item 
				\begin{align}
					\sum_{\substack{m=1 \\ m\equiv 1 \pmod 2}}^{\infty} \dfrac{q^{m} -q^{3m}}{1-q^{8m}} + \sum_{\substack{m=1 \\ m\equiv 1 \pmod 2}}^{\infty} \dfrac{q^{5m} -q^{7m}}{1-q^{8m}} = \dfrac{\eta^4(16\tau)}{\eta^2(8\tau)} \left[\dfrac{\varphi(q^4)}{q \psi(q^8)}\right].
				\end{align}
				\item
				\begin{align}
					\sum_{\substack{m=1 \\ m\equiv 1 \pmod 4}}^{\infty} \dfrac{q^{m} +q^{5m}}{1-q^{8m}} - \sum_{\substack{m=1 \\ m\equiv 3 \pmod 4}}^{\infty} \dfrac{q^{3m} +q^{7m}}{1-q^{8m}} = \dfrac{\eta^2(32\tau)\eta(16\tau)}{\eta(8\tau)} \left[\dfrac{\varphi(q^4)}{q^{2} \psi(q^{16})}\right].
				\end{align}	
				\item 
				\begin{align}
					\nonumber	\sum_{\substack{m=1 \\ m\equiv 1 \pmod 8}}^{\infty} \dfrac{q^{m} -q^{5m}}{1-q^{8m}} &- \sum_{\substack{m=1 \\ m\equiv 3 \pmod 8}}^{\infty} \dfrac{q^{3m} +q^{7m}}{1-q^{8m}} + \sum_{\substack{m=1 \\ m\equiv 5 \pmod 8}}^{\infty} \dfrac{q^{m} +q^{5m}}{1-q^{8m}} \\&- \sum_{\substack{m=1 \\ m\equiv 7 \pmod 8}}^{\infty} \dfrac{q^{7m} -q^{3m}}{1-q^{8m}} = \dfrac{\eta^2(16\tau)\eta^2(64\tau)}{\eta(8\tau) \eta(32\tau)} \left[\dfrac{\varphi(q^{16})}{q^{4} \psi(q^{32})}\right].
				\end{align}
			\end{enumerate}
		\end{theorem}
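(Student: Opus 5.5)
The plan is to imitate the proof of \eqref{E1}. In each part I would fold the terms with $m\to -m$ into a single bilateral sum over an arithmetic progression, evaluate each bilateral sum with Ramanujan's ${}_1\psi_1$ formula \eqref{e1}, and recognise the resulting products as $\mathfrak{H}$ at a suitable power of $q$. Finally \eqref{H-H} or \eqref{H+H} turns the bracket into a $\varphi/\psi$ quotient.

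\emph{Step 1 (reindexing).} The basic tool is the elementary identity
\[
\frac{q^{-am}}{1-q^{-8m}}=-\frac{q^{(8-a)m}}{1-q^{8m}} .
\]
For (i), it shows that
\[
\sum_{m\ \mathrm{odd},\, m\in\mathbb Z}\frac{q^{m}}{1-q^{8m}}=\sum_{m\ \mathrm{odd}\geq 1}\frac{q^{m}-q^{7m}}{1-q^{8m}},
\]
and similarly that the bilateral sum of $q^{3m}/(1-q^{8m})$ equals $\sum_{m\ \mathrm{odd}\geq1}(q^{3m}-q^{5m})/(1-q^{8m})$. Writing $m=2k+1$, the left side of (i) becomes
\[
\sum_{k\in\mathbb Z}\frac{q^{2k+1}}{1-q^{16k+8}}-\sum_{k\in\mathbb Z}\frac{q^{6k+3}}{1-q^{16k+8}}.
\]
For (ii), the same manipulation should give bilateral sums over $m\equiv 1\pmod 4$, i.e.\ $m=4k+1$, of $(q^{m}+q^{5m})/(1-q^{8m})$. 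The negative-index terms reproduce exactly the $m\equiv3\pmod 4$ sums with the stated minus sign. For (iii), I would group the four residue classes mod $8$ into bilateral sums over $m=8k+1$ (or $8k+5$) in the same way, checking each sign individually.

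\emph{Step 2 (applying \eqref{e1}).} In each sum I replace $q$ by $q^{16}$, $q^{32}$ or $q^{64}$ as appropriate, take $x=q^{8}$, and let $z$ be the step factor: $z=q^2,q^6$ for (i), $z=q^4,q^{20}$ for (ii), and $z=q^8$-type choices for (iii). For (i) this yields
\[
\frac{(q^{16};q^{16})_\infty^2}{(q^{8};q^{16})_\infty^2}\left[q\frac{(q^6,q^{10};q^{16})_\infty}{(q^2,q^{14};q^{16})_\infty}-q^3\frac{(q^2,q^{14};q^{16})_\infty}{(q^6,q^{10};q^{16})_\infty}\right].
\]
The bracket equals $1/\mathfrak{H}(q^2)-\mathfrak{H}(q^2)$, by the product form of $\mathfrak{H}$ in \eqref{H} with $q\to q^2$. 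Then \eqref{H-H} with $q\to q^2$ gives $\varphi(q^4)/(q\psi(q^8))$. The prefactor is $\eta^4(16\tau)/\eta^2(8\tau)$, exactly as in \eqref{E1}.

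In (ii) the analogous bracket should be $1/\mathfrak{H}(q^4)+\mathfrak{H}(q^4)$, which \eqref{H+H} with $q\to q^4$ turns into $\varphi(q^4)/(q^2\psi(q^{16}))$. In (iii) the bracket should be $1/\mathfrak{H}(q^8)-\mathfrak{H}(q^8)$, and \eqref{H-H} with $q\to q^8$ gives $\varphi(q^{16})/(q^4\psi(q^{32}))$. The asymmetric signs in the statement of (iii) are presumably exactly what produces the minus sign here.

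\emph{Step 3 (eta prefactors).} The prefactor from \eqref{e1} has the form $(Q;Q)_\infty^2/\bigl((x;Q)_\infty(Q/x;Q)_\infty\bigr)$, with $Q=q^{32}$ in (ii) and $Q=q^{64}$ in (iii). I would rewrite it via
\[
(q^{8};q^{32})_\infty(q^{24};q^{32})_\infty=\frac{(q^8;q^{16})_\infty}{\ }=\frac{(q^8;q^8)_\infty}{(q^{16};q^{16})_\infty},
\]
and the analogous identity in (iii), and then insert \eqref{de}. The fractional powers of $q$ coming from the eta normalisations must combine with the explicit $q^{1},q^2,q^4$ so that only the stated $q$-powers survive.

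I expect the main obstacle to be this bookkeeping, rather than any conceptual point. Specifically, there are three things to get right: the exact reindexing and signs in (iii), where four residue classes mod $8$ must pair up into two clean bilateral sums; choosing $z$ so that the condition $|Q|<|z|<1$ holds, or else justifying it by analytic continuation; and verifying that the product prefactor equals the claimed eta quotient with matching $q$-exponents. For (iii) I would first expand both sides to a few terms in $q$ to fix the correct grouping and sign convention, and only then carry out the ${}_1\psi_1$ evaluation.
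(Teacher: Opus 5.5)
Your argument is correct, but it does not follow the paper. The paper's proof is one sentence: it substitutes \eqref{H-H} and \eqref{H+H} into Theorem 3.1 of \cite{Chaudhary_2025}, which supplies the Lambert-series side for these eta-quotient multiples of $1/\mathfrak{H}\pm\mathfrak{H}$. You instead rederive those expansions from \eqref{e1}, as in the proof of \eqref{E1}. That makes the section self-contained and shows where each eta-quotient comes from; the paper's route is shorter but leaves all the verification to the external reference.

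The bookkeeping does close up:
\begin{enumerate}[(a)]
\item In (i) the bracket is $q^2\bigl(1/\mathfrak{H}(q^2)-\mathfrak{H}(q^2)\bigr)$ and the prefactor is $q^{-2}\eta^4(16\tau)/\eta^2(8\tau)$. So your two intermediate claims are each off by $q^{\pm 2}$, and the errors cancel.
\item In (ii) the bracket is $q^3\bigl(1/\mathfrak{H}(q^4)+\mathfrak{H}(q^4)\bigr)$ and the prefactor is $q^{-3}\eta^2(32\tau)\eta(16\tau)/\eta(8\tau)$.
\item In (iii) you need both progressions, not one. Write $S_r(a)=\sum_{m\equiv r\ (\mathrm{mod}\ 8),\,m\in\mathbb{Z}}q^{am}/(1-q^{8m})$. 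The literal regrouping gives $S_1(1)-S_1(5)+S_5(1)+S_5(5)$. Then \eqref{e1} with base $q^{64}$ shows $S_1(5)=S_5(1)$: both equal $q^5(q^{16},q^{48},q^{64},q^{64};q^{64})_\infty/(q^8,q^{24},q^{40},q^{56};q^{64})_\infty$. These cancel, which is the pairing into two sums you expected.
\item In $S_5(5)$ one has $x=z=q^{40}$, so $xz=q^{80}$ and $(q^{80},q^{-16};q^{64})_\infty=-q^{-16}(q^{16},q^{48};q^{64})_\infty$. This is where the minus sign comes from. The left side becomes $q^5\,\frac{(q^{64};q^{64})_\infty^2(q^{16};q^{32})_\infty}{(q^{8};q^{16})_\infty}\bigl(1/\mathfrak{H}(q^8)-\mathfrak{H}(q^8)\bigr)$, and the infinite-product prefactor equals $q^{-5}\eta^2(16\tau)\eta^2(64\tau)/(\eta(8\tau)\eta(32\tau))$.
\end{enumerate}

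Every $z$ you use satisfies $|Q|<|z|<1$ directly, so no analytic continuation is needed.
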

		\begin{proof}
			The above three identities can be obtained by using \eqref{H-H} and \eqref{H+H} in Theorem 3.1 of the article \cite{Chaudhary_2025}.
		\end{proof}
		The left-hand sides of the above identities can be viewed as the generalized Lambert series associated with the residue classes modulo 20.
		For instance, let 
		\begin{align*}
			F_1(q):= \sum_{\substack{n\geq1 \\ n\equiv 1 \pmod 2}} \dfrac{q^{n}+q^{3n}-q^{5n}-q^{7n}}{1-q^{8n}}
		\end{align*} 
		$F_1(q)$ admits the Fourier series expansion:
		\begin{align*}
			F_1(q)= \sum_{N\geq 1} a(N) q^N
		\end{align*} 
		where 
		\begin{align*}
			a(N) = \sum_{\substack{d|N \\ d \,\, odd}} \chi(d),
		\end{align*}
		with 
		\begin{align*}
			\chi(d)=
			\begin{cases}
				1, & d\equiv 1,3 \pmod{8},\\
				-1, & d\equiv 5,7 \pmod{8},\\
				0, & \text{otherwise}.
			\end{cases}
		\end{align*}
		Note that 
		\begin{align*}
			q \dfrac{d }{dq} F_1(q) = \sum_{N\geq 1} N a(N) q^N.
		\end{align*} is a divisor weighted Lambert series of Eisenstein type.
		Thus, the logarithmic derivatives of $F_1(q)$  yields  divisor-weighted arithmetic functions having coefficients analogous to Eisenstein coefficients.
		
		\begin{theorem}
			For $|q|<1$, the following identities hold:
			\begin{enumerate}[(i)]
				\item 
				\begin{align}
					\sum_{m=1}^{\infty} \dfrac{m(q^{m} -q^{3m}-q^{5m}+q^{7m})}{1-q^{8m}} = \dfrac{\eta^4(8\tau)\eta^2(4\tau)}{\eta^2(2\tau)} \left[\dfrac{\varphi(q)}{q^{1/2} \psi(q^4)}\right].
				\end{align}
				\item
				\begin{align}
					\sum_{m=1}^{\infty} \left(\dfrac{m}{3}\right) \dfrac{(q^{m} -q^{3m}-q^{5m}+q^{7m})}{1-q^{8m}} = \dfrac{\eta(\tau) \eta^2(2\tau) \eta(6\tau)\eta^3(8\tau)\eta(24\tau)}{\eta(3\tau) \eta^5(4\tau)} \left[\dfrac{\varphi(q^2)}{q^{1/2} \psi(q^4)}\right].
				\end{align}	
			\end{enumerate}
			Here $\left(\dfrac{.}{p}\right)$ denotes the Legendre symbol modulo $p$.
		\end{theorem}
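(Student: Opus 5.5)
The plan is to reduce both identities to statements about holomorphic modular forms of integral weight. Each left-hand side is an Eisenstein series attached to a pair of Dirichlet characters, and each right-hand side is an eta quotient. Both sides would then be compared on a common congruence subgroup, with the identity finished by the Sturm bound.

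For (i), I first rewrite the left-hand side as a character Lambert series. Let $\chi_8(n)$ be the character taking the value $1$ for $n\equiv \pm1 \pmod 8$, $-1$ for $n\equiv \pm 3\pmod 8$, and $0$ for even $n$. Expanding $1/(1-q^{8m})$ as a geometric series and summing over $m$ first gives
\begin{align*}
\sum_{m=1}^{\infty} \frac{m(q^{m}-q^{3m}-q^{5m}+q^{7m})}{1-q^{8m}}
=\sum_{n=1}^{\infty}\chi_8(n)\frac{q^n}{(1-q^n)^2}
=\sum_{N\ge1}\Big(\sum_{de=N} d\,\chi_8(e)\Big)q^N,
\end{align*}
which is the weight $2$ Eisenstein series $E_2^{\mathbf 1,\chi_8}$.

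On the right, I use \eqref{H+H} to replace the bracket by $1/\mathfrak{H}(q)+\mathfrak{H}(q)$. I then write $\varphi$ and $\psi$ as eta quotients, namely $\varphi(q)=\eta^5(2\tau)/(\eta^2(\tau)\eta^2(4\tau))$ and $q^{1/2}\psi(q^4)=\eta^2(8\tau)/\eta(4\tau)$. This gives the right-hand side as
\begin{align*}
\frac{\eta^2(8\tau)\,\eta^3(2\tau)\,\eta(4\tau)}{\eta^{2}(\tau)}.
\end{align*}
The $q$-powers should be checked to cancel, which tests the normalisation in the statement. This is a weight $2$ eta quotient. Using the Gordon--Hughes--Newman criteria, I would check that it is holomorphic on $\Gamma_0(8)$, possibly after $q\to q^2$ bookkeeping, with nebentypus $\chi_8$. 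Since $\dim M_2(\Gamma_0(8),\chi_8)$ is small, it then suffices to compare the first few coefficients, which I would do by direct expansion.

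An alternative to the Sturm argument, closer in spirit to the paper, is to differentiate Ramanujan's ${}_1\psi_1$ formula \eqref{e1} in $z$ and specialise at $x=q^{j}$, where $q$ is replaced by $q^8$. This produces $\sum q^n/(1-q^n)^2$ restricted to a residue class. The product side is then the square of the theta products appearing in \eqref{e2}, times a logarithmic-derivative factor, and that route would need Jacobi-type product manipulations from Section 2.

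For (ii), the same expansion gives
\begin{align*}
\sum_{N\ge1}\Big(\sum_{de=N}\big(\tfrac{d}{3}\big)\chi(e)\Big)q^N,
\end{align*}
where $\chi$ is the mod-$8$ pattern $1,-1,-1,1$. This is a weight $1$ Eisenstein series with character $\big(\tfrac{\cdot}{3}\big)\chi$ of conductor $24$. I would rewrite the right-hand side, using \eqref{H-H} and $\varphi(q^2)=\eta^5(4\tau)/(\eta^2(2\tau)\eta^2(8\tau))$, as a single weight $1$ eta quotient of level dividing $96$. I would then verify holomorphy at all cusps and compare coefficients up to the Sturm bound for $\Gamma_0(96)$ (or $192$ if half-integral exponents of $q$ force a doubling).

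The main obstacle is this cusp and holomorphy verification for (ii). The eta quotient there has several negative exponents, namely $\eta(3\tau)$ and $\eta^5(4\tau)$, so the orders at cusps must be computed carefully. The level must also be taken large enough to absorb the $q^{1/2}$ shift. If it fails to be holomorphic, I would instead try to factor both sides through the ${}_1\psi_1$ product for the modulus-$24$ sums. In that approach I would split the left side by residues mod $24$ and pair terms as in the proof of Theorem 4.1.
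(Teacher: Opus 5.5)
Your argument is correct, but it takes a genuinely different route from the paper's. The paper gives no independent proof. It substitutes \eqref{H-H} and \eqref{H+H} into Theorem 3.3 of Chaudhary and Vanitha, so it only transcribes already-known Lambert series identities for $\mathfrak{H}(q)$ into $\varphi/\psi$ form. You instead verify the identities from scratch with modular forms, and your plan goes through cleanly. Once you insert the eta-quotient forms of $\varphi$ and $\psi$, the brackets collapse. The right side of (i) becomes $\eta^3(2\tau)\eta(4\tau)\eta^2(8\tau)/\eta^2(\tau)$, and that of (ii) becomes $\eta(\tau)\eta(4\tau)\eta(6\tau)\eta(24\tau)/\bigl(\eta(3\tau)\eta(8\tau)\bigr)$. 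Several of your worries then disappear:
\begin{enumerate}[(a)]
\item Both quotients have leading term exactly $q$, so no $q\to q^2$ rescaling or passage to level $96$ is needed.
\item The factor $\eta^5(4\tau)$ you were concerned about cancels.
\item The two quotients lie in $M_2(\Gamma_0(8),\chi_8)$ and $M_1\bigl(\Gamma_0(24),\bigl(\frac{-24}{\cdot}\bigr)\bigr)$ respectively. Their orders are nonnegative at every cusp, vanishing only at the cusp $0$ in (i) and at the cusps with denominators $3$ and $8$ in (ii), so your fallback is unnecessary.
\item The Sturm bounds are $2$ and $4$. Both sides of (i) expand as $q+2q^2+2q^3+4q^4+4q^5+\cdots$, and both sides of (ii) as $q-q^2-q^3+q^4-2q^5+\cdots$.
\end{enumerate}
Your route gives a self-contained, finitely checkable proof and shows that the continued fraction, and \eqref{H-H}, \eqref{H+H}, play no essential role. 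The paper's route is brief and keeps the identities framed as statements about $\mathfrak{H}(q)$, but its correctness rests entirely on the cited theorem.
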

		\begin{proof}
			The above two identities can be obtained by using \eqref{H-H} and \eqref{H+H} in Theorem 3.3 of the article \cite{Chaudhary_2025}.
		\end{proof}
		
		\section{Conclusion}
		In this article, we established several identities connecting Ramanujan-G\"ollnitz-Gordon  continued fraction and Ramanujan's continued fraction of order four, using the product expansion of Jacobi's theta function $\theta_1$. The technique involved in this work suggests a systematic framework for deriving identities connecting continued fractions through theta function evaluations. Also, we obtained Lambert series identities utilizing  Ramanujan's $_1\psi_1$ summation formula.

		\section*{Acknowledgments}
		The first author acknowledges the support of DST-INSPIRE, Department of Science and Technology, Government of India, India for providing INSPIRE fellowship [DST/INSPIRE/03/2022/004970].

\end{document}